\newtheorem{theorem}{Theorem}
\newtheorem{definition}[theorem]{Definition}
\newtheorem{lemma}[theorem]{Lemma}
\newtheorem{remark}[theorem]{Remark}
\newenvironment{proof}[1][Proof]{\textbf{#1.} }{\ \rule{0.5em}{0.5em}}
\def \R{\mathbb{R}}
\def \N{\mathbb{N}}
\def \E{\mathbb{E}}
\def \bf{\textbf}
\begin{document}

\begin{frontmatter}



\title{ Time-dependent Neutral stochastic functional differential equation driven by
 a fractional  Brownian motion in a Hilbert space}

\author[a]{Brahim Boufoussi }
\ead{b.boufoussi@uca.ma}
\author[a]{Salah Hajji}
\ead{hajjisalahe@gmail.com}

\author[b]{El Hassan Lakhel}
\ead{e.lakhel@uca.ma}

\address[a]{ Department of Mathematics, Faculty of Sciences Semlalia, Cadi Ayyad University, 2390 Marrakesh, Morocco\\}
\address[b]{  National School of Applied Sciences, Cadi Ayyad University, 46000 Safi , Morocco}
\begin{abstract}
In this paper we consider a class of time-dependent neutral
stochastic functional differential equations with finite delay
driven   by a fractional Brownian motion   in a Hilbert space. We
prove an existence and uniqueness result  for the mild solution by
means of the Banach fixed point principle. A practical example is
provided to illustrate the viability of the abstract result of
this work.
\end{abstract}

\begin{keyword}
Neutral stochastic evolution equation. Evolution operator.
Fractional Brownian motion.  Wiener integral. Banach fixed point
theorem.



\MSC   60H15  \sep 60G15  \sep 60J65

\end{keyword}

\end{frontmatter}

\section{Introduction}
The stochastic functional differential equations have attracted
much attention because of their practical applications in many
areas such as physics, medicine, biology, finance, population
dynamics, electrical engineering, telecommunication networks, and
other fields. For more details, one can see \cite{da}, and
\cite{ren} and the references therein.

In many areas of science, there has been an increasing interest in the investigation of the systems incorporating memory or aftereffect, i.e., there is the
effect of delay on state equations. Therefore, there is a real
need to discuss stochastic evolution systems with delay. In many
mathematical  models the claims often display long-range memories,
possibly due to extreme weather, natural disasters, in some cases,
many stochastic dynamical systems depend not only on present and
past states, but also contain the derivatives with delays. In such
cases, the class of neutral stochastic differential equations
driven by fractional Brownian motion  provide an important tool
for describing and analyzing such systems.  Very recently, neutral
stochastic functional differential  equations driven by fractional
Brownian motion   have attracted the interest of many researchers.
One can see \cite{boufoussi3}, \cite{carab}, \cite{hlak1} and the
references therein.\\ Motivated by the above works, this paper is
concerned with  the existence and uniqueness    of mild solutions
for a class of time-dependent
 neutral functional stochastic differential equations  described in the form:
{\small \begin{equation}\label{eq1}
 \left\{\begin{array}{lll}
d[x(t)+g(t,x(t-r(t)))]=[A(t)x(t)+f(t,x(t-\rho(t))]dt+\sigma
(t)dB^H(t)   ,\;0\leq t \leq T,\nonumber\\
x(t)=\varphi(t) ,\;-\tau \leq t \leq 0,
\end{array}\right.
\end{equation}}
where   \{ $A(t),\,t\in[0,T]\}$ is a family of linear closed
operators from a  Hilbert space $X$ into $X$ that generates an
  evolution system of operators $\{U(t,s),\, 0\leq s\leq t \leq T\}$. $B^H$ is a fractional Brownian motion on a
real and separable Hilbert space $Y$, $r,\; \rho
\;:[0,+\infty)\rightarrow [0,\tau]\; (\tau
>0)$ are continuous and $f,g:[0,+\infty)\times X \rightarrow X,\;
\; \sigma:[0,+\infty) \rightarrow \mathcal{L}_2^0(Y,X)$,$\; \;$
   are appropriate
functions. Here $\mathcal{L}_2^0(Y,X)$ denotes the space of all
$Q$-Hilbert-Schmidt operators from $Y$ into $X$ (see section 2 below).\\

On the other hand, to the best of our knowledge, there is no paper
which investigates the study of time-dependent neutral stochastic
functional differential equations with delays driven   by
fractional Brownian motion. Thus, we will make the first attempt
to study such problem in this paper.\\
 Our results are inspired by the one in
\cite{boufoussi3} where the existence and uniqueness of mild
solutions to  model (\ref{eq1}) with $A(t)=A, \forall t\in [0,T],$
is studied, as well as some results on the  asymptotic behavior.\\

The  substance  of the paper is organized as follows.  Section 2,
recapitulate some notations,  basic  concepts, and basic results
about fractional Brownian motion,  Wiener integral over Hilbert
spaces and we recall some preliminary results about evolution
operator. We need to  prove a new technical lemma for the
$\mathbb{L}^2-$estimate of  stochastic convolution integral which is
different from that used by \cite{boufoussi3}. Section 3, gives
sufficient conditions to prove the existence and uniqueness for
the problem $(\ref{eq1})$. In Section 4 we give an example to illustrate the efficiency of the obtained result.
\section{Preliminaries}\label{sec:1}
\subsection{Evolution families}
In this subsection we introduce the notion of evolution family.

\begin{definition}\label{d1}  A set  $\{U(t, s):  0\leq s \leq t\leq T\}$
of bounded linear operators on a Hilbert space $X$ is called an
\emph{evolution family} if
\begin{itemize}
\item[(a)] $U(t,s)U(s,r)=U(t,r)$, $U(s, s)=I$ if $ r \leq s \leq  t
$,
\item[(b)] $(t,s)\to U(t,s)x$ is strongly continuous for $t> s$.
\end{itemize}
\end{definition}
Let $\{A(t),\, t\in[0,T]\}$ be a family
of closed densely defined linear unbounded operators on the
Hilbert space $X$ and with domain $D(A(t))$ independent of $t$, satisfying the following conditions introduced by   \cite{AT}.

There exist constants $\lambda_0\geq 0$, $\theta\in
(\frac{\pi}{2}, \pi)$, $L$,
 $K\geq 0$, and $\mu$, $\nu\in (0, 1]$ with $\mu +\nu >1$ such that
\begin{equation}\label{AT1}
\Sigma_\theta\cup \{0\}\subset\rho (A(t)-\lambda_0), \quad
\|R(\lambda, A(t)-\lambda_0)\|\leq\frac{K}{1+|\lambda|}
\end{equation}
and
\begin{equation}\label{AT2}
\|(A(t)-\lambda_0)R(\lambda, A(t)-\lambda_0)\big[R(\lambda_0,
A(t)) -R(\lambda_0, A(s))\big]\|\leq L|t-s|^\mu|\lambda|^{-\nu},
\end{equation}
for $t$, $s\in\mathbb{R}$,
$\lambda\in\Sigma_\theta$ where $\Sigma_\theta:=\big\{\lambda\in{\mathbb{C}}-\{0\}:
 |\arg \lambda|\leq\theta\big\}$.

It is well known, that this assumption implies that there exists a unique evolution
family $\{U(t, s):  0\leq s \leq t\leq T\}$ on $X$ such that $(t,
s)\to U(t, s)\in{\mathcal L}(X)$ is continuous for $t>s$,
$U(\cdot, s)\in \mathcal{C}^1((s, \infty), {\mathcal L}(X))$,
 $\partial_tU(t, s)=A(t)U(t, s)$, and
\begin{equation}\label{w1}
\|A(t)^kU(t, s)\|\leq C(t-s)^{-k}
\end{equation}
for $0<t-s\leq 1$, $k=0, 1$, $0\leq \alpha <\mu$, $x\in
D((\lambda_0-A(s))^\alpha)$, and a constant $C$ depending only on
the constants in (\ref{AT1})-(\ref{AT2}). Moreover,
$\partial_s^+U(t, s)x=-U(t, s)A(s)x$ for $t>s$ and $x\in D(A(s))$
with $A(s)x\in \overline {D(A(s))}$. We say that $A(\cdot)$
generates $\{U(t, s): 0\leq s \leq t\leq T\}$. Note that $U(t,s)$
is exponentially bounded by (\ref{w1}) with $k=0$.

\begin{remark} If $\{A(t), \,t\in[0,T]\}$ is a second order
differential operator $A$, that is $A(t)=A$ for each $t\in[0,T]$,
then $A$ generates a $C_0-$semigroup $\{e^{At}, t\in[0,T]\}$.
\end{remark}

For additional details on evolution system and their properties,
we refer the reader to \cite{pazy}.

\subsection{Fractional Brownian Motion}

For the convenience for the reader we recall briefly here some of
the basic results of fractional Brownian motion calculus. For
details of this section, we refer the reader to \cite{nualart} and
the references therein.\\

Let $(\Omega,\mathcal{F}, \mathbb{P})$ be a complete probability
space. A standard fractional Brownian motion (fBm) $\{\beta^H(t),
t\in\mathbb{R}\}$
 with Hurst parameter $H\in (0, 1)$ is a zero mean  Gaussian process with continuous
sample paths such that
\begin{eqnarray}\label{A3}
\mathbb{E}[\beta^H(t)\beta^H(s)]=\frac{1}{2}\big(t^{2H}+s^{2H}-|t-s|^{2H}\big)
\end{eqnarray}
for $s$, $t\in\mathbb{R}$. It is clear that for $H=1/2$, this
process is a standard Brownian motion. In this paper, it is
assumed that $H\in (\frac{1}{2}, 1)$.

This process was introduced by \cite{kol} and later
studied by \cite{MV}.  Its self-similar
and long-range dependence make this process a useful
driving noise in
 models arising in physics, telecommunication networks, finance and other fields.


 Consider a time interval $[0,T]$ with arbitrary fixed
horizon $T$ and let $\{\beta^H(t) , t \in [0, T ]\}$ the
one-dimensional fractional Brownian motion with Hurst parameter
$H\in(1/2,1)$. It is well known that  $\beta^H$ has the following
Wiener integral representation:
\begin{equation}\label{rep}
\beta^H(t) =\int_0^tK_H(t,s)d\beta(s),
 \end{equation}
where $\beta = \{\beta(t) :\; t\in [0,T]\}$ is a Wiener process,
and $K_H(t; s)$ is the kernel given by
$$K_H(t, s )=c_Hs^{\frac{1}{2}-H}\int_s^t (u-s)^{H-\frac{3}{2}}u^{H-\frac{1}{2}}du,
$$
for $t>s$, where $c_H=\sqrt{\frac{H(2H-1)}{\beta (2-2H,H-\frac{1}{2})}}$ and $\beta(,)$
 denotes the Beta function. We put $K_H(t, s ) =0$ if $t\leq s$.\\
We will denote by $\mathcal{H}$ the reproducing kernel Hilbert
space of the fBm. In fact $\mathcal{H}$ is the closure of the set of
indicator functions $\{1_{[0;t]},  t\in[0,T]\}$ with respect to
the scalar product
$$\langle 1_{[0,t]},1_{[0,s]}\rangle _{\mathcal{H}}=R_H(t , s).$$
The mapping $1_{[0,t]}\rightarrow \beta^H(t)$
 can be extended to an isometry between $\mathcal{H}$
and the first  Wiener chaos and we will denote by
$\beta^H(\varphi)$ the image of $\varphi$ by the previous
isometry.

We recall that for $\psi,\varphi \in \mathcal{H}$ their scalar
product in $\mathcal{H}$ is given by
$$\langle \psi,\varphi\rangle _{\mathcal{H}}=H(2H-1)\int_0^T\int_0^T\psi(s)\varphi(t)|t-s|^{2H-2}dsdt.
$$
Let us consider the operator $K_H^*$ from $\mathcal{H}$ to
$\mathbb{L}^2([0,T])$ defined by
$$(K_H^*\varphi)(s)=\int_s^T\varphi(r)\frac{\partial K}{\partial
r}(r,s)dr.
$$ We refer to \cite{nualart} for the proof of the fact
that $K_H^*$ is an isometry between $\mathcal{H}$ and
$L^2([0,T])$. Moreover for any $\varphi \in \mathcal{H}$, we have
$$\beta^H(\varphi)=\int_0^T(K_H^*\varphi)(t)d\beta(t).$$
It follows from \cite{nualart} that the elements of $\mathcal{H}$
may be not functions but distributions of negative order. In the
case $H>\frac{1}{2}$, the second partial derivative of the
covariance function
$$
\frac{\partial R_H}{\partial t\partial s}=\alpha_H|t-s|^{2H-2},
$$
where $\alpha_H=H(2H-2)$, is integrable, and we can write
\begin{equation}\label{r(t,t)}
R_H(t,s)=\alpha_H\int_0^t\int_0^s|u-v|^{2H-2}dudv.
\end{equation}

In order to obtain a space of functions contained in
$\mathcal{H}$, we consider the linear space $|\mathcal{H}|$
generated by the measurable functions $\psi$ such that
$$\|\psi \|^2_{|\mathcal{H}|}:= \alpha_H  \int_0^T \int_0^T|\psi(s)||\psi(t)| |s-t|^{2H-2}dsdt<\infty,
$$
where $\alpha_H = H(2H-1)$. The space $|\mathcal{H}|$ is a Banach
space with the norm  $\|\psi\|_{|\mathcal{H}|}$ and we have the
following inclusions (see \cite{nualart}).
\begin{lemma}\label{lem1}
$$\mathbb{L}^2([0,T])\subseteq \mathbb{L}^{1/H}([0,T])\subseteq |\mathcal{H}|\subseteq \mathcal{H},
$$
and for any $\varphi\in \mathbb{L}^2([0,T])$, we have
$$\|\psi\|^2_{|\mathcal{H}|}\leq 2HT^{2H-1}\int_0^T
|\psi(s)|^2ds.
$$
\end{lemma}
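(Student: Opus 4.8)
The plan is to handle the three inclusions separately and then establish the quantitative bound by a direct computation. For the first inclusion $\mathbb{L}^2([0,T])\subseteq \mathbb{L}^{1/H}([0,T])$, I would invoke the standard nesting of Lebesgue spaces on a finite measure space: since $H\in(1/2,1)$ we have $1/H\in(1,2)$, in particular $1/H<2$, so H\"older's inequality (equivalently Jensen's, using finiteness of $|[0,T]|=T$) gives $\|\psi\|_{\mathbb{L}^{1/H}}\leq C(T)\|\psi\|_{\mathbb{L}^2}$. This step is routine and requires nothing beyond the finiteness of the interval.

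The second inclusion $\mathbb{L}^{1/H}([0,T])\subseteq |\mathcal{H}|$ is where the genuine work lies, and I expect it to be the main obstacle. The defining quantity $\|\psi\|^2_{|\mathcal{H}|}=\alpha_H\int_0^T\int_0^T|\psi(s)||\psi(t)||s-t|^{2H-2}\,ds\,dt$ involves the singular kernel $|s-t|^{2H-2}=|s-t|^{-(2-2H)}$, so its finiteness for $\psi\in\mathbb{L}^{1/H}$ cannot be read off directly and instead requires the Hardy--Littlewood--Sobolev inequality. I would apply HLS in dimension $n=1$ with both integrability exponents equal to $p=q=1/H$ and singularity order $\lambda=2-2H$; the crucial observation is that these data exactly meet the critical balance $\frac{1}{p}+\frac{1}{q}+\lambda=H+H+(2-2H)=2$, with $0<\lambda<1$ and $p,q>1$ guaranteed by $1/2<H<1$. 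HLS then yields $\|\psi\|^2_{|\mathcal{H}|}\leq C_H\|\psi\|^2_{\mathbb{L}^{1/H}}<\infty$. This is precisely the point that is deferred to \cite{nualart}.

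For the third inclusion $|\mathcal{H}|\subseteq\mathcal{H}$, I would observe that the $\mathcal{H}$-norm is dominated by the $|\mathcal{H}|$-norm. From the scalar product formula $\langle\psi,\varphi\rangle_{\mathcal{H}}=\alpha_H\int_0^T\int_0^T\psi(s)\varphi(t)|t-s|^{2H-2}\,ds\,dt$, setting $\varphi=\psi$ and passing to absolute values inside the integral gives $\|\psi\|^2_{\mathcal{H}}\leq\alpha_H\int_0^T\int_0^T|\psi(s)||\psi(t)||t-s|^{2H-2}\,ds\,dt=\|\psi\|^2_{|\mathcal{H}|}$. Hence any $\psi$ of finite $|\mathcal{H}|$-norm has finite $\mathcal{H}$-norm and lies in $\mathcal{H}$.

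Finally, for the quantitative estimate I would argue directly. Using $2|\psi(s)||\psi(t)|\leq|\psi(s)|^2+|\psi(t)|^2$ together with the symmetry of the kernel in $(s,t)$, the double integral collapses to $\|\psi\|^2_{|\mathcal{H}|}\leq\alpha_H\int_0^T|\psi(s)|^2\Big(\int_0^T|s-t|^{2H-2}\,dt\Big)\,ds$. The inner integral is explicit: splitting the domain at $t=s$ gives $\int_0^T|s-t|^{2H-2}\,dt=\frac{s^{2H-1}+(T-s)^{2H-1}}{2H-1}\leq\frac{2T^{2H-1}}{2H-1}$, since $2H-1>0$ and $0\leq s\leq T$ force both $s^{2H-1}$ and $(T-s)^{2H-1}$ to be at most $T^{2H-1}$. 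Substituting $\alpha_H=H(2H-1)$ then cancels the factor $2H-1$ and produces exactly $\|\psi\|^2_{|\mathcal{H}|}\leq 2HT^{2H-1}\int_0^T|\psi(s)|^2\,ds$; as a byproduct this also re-proves the embedding $\mathbb{L}^2([0,T])\subseteq|\mathcal{H}|$ with an explicit constant, independently of the HLS step.
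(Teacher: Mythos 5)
Your proof is correct, but note that the paper itself offers no proof of this lemma: it is quoted as a known embedding result with a pointer to \cite{nualart}, so there is no in-paper argument to compare against. What you have written is essentially the standard proof from that reference. The first inclusion by H\"older on the finite interval, the second by Hardy--Littlewood--Sobolev with $p=q=1/H$, $\lambda=2-2H$ (your verification $H+H+(2-2H)=2$ and $0<2-2H<1$ is exactly the right check), and the quantitative bound by symmetrizing with $2|\psi(s)||\psi(t)|\le|\psi(s)|^2+|\psi(t)|^2$ and computing $\int_0^T|s-t|^{2H-2}\,dt=\frac{s^{2H-1}+(T-s)^{2H-1}}{2H-1}\le\frac{2T^{2H-1}}{2H-1}$ are all correct, and the constant $\alpha_H\cdot\frac{2T^{2H-1}}{2H-1}=2HT^{2H-1}$ comes out exactly as stated. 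The one place where your argument is slightly thinner than the full story is the third inclusion: since $\mathcal{H}$ is defined as the abstract completion of the span of indicators, showing $\|\psi\|_{\mathcal{H}}\le\|\psi\|_{|\mathcal{H}|}$ by passing to absolute values only makes sense once $\psi$ is already identified with an element of $\mathcal{H}$; the standard fix is to note that step functions are dense in $|\mathcal{H}|$ and that the inner-product formula extends by continuity, which is a one-line addition. This is a minor gloss, not a gap in substance.
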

Let $X$ and $Y$ be two real, separable Hilbert spaces and let
$\mathcal{L}(Y,X)$ be the space of bounded linear operator from
$Y$ to $X$. For the sake of convenience, we shall use the same
notation to denote the norms in $X,Y$ and $\mathcal{L}(Y,X)$. Let
$Q\in \mathcal{L}(Y,Y)$ be an operator defined by $Qe_n=\lambda_n
e_n$ with finite trace
 $trQ=\sum_{n=1}^{\infty}\lambda_n<\infty$. where $\lambda_n \geq 0 \; (n=1,2...)$ are non-negative
  real numbers and $\{e_n\}\;(n=1,2...)$ is a complete orthonormal basis in $Y$.
 Let $B^H=(B^H(t))$ be  $Y-$ valued fbm on
  $(\Omega,\mathcal{F}, \mathbb{P})$ with covariance $Q$ as
 $$B^H(t)=B^H_Q(t)=\sum_{n=1}^{\infty}\sqrt{\lambda_n}e_n\beta_n^H(t),
 $$
 where $\beta_n^H$ are real, independent fBm's. This process is  Gaussian, it
 starts from $0$, has zero mean and covariance:
 $$E\langle B^H(t),x\rangle\langle B^H(s),y\rangle=R(s,t)\langle Q(x),y\rangle \;\; \mbox{for all}\; x,y \in Y \;\mbox {and}\;  t,s \in [0,T]$$
In order to define Wiener integrals with respect to the $Q$-fBm,
we introduce the space $\mathcal{L}_2^0:=\mathcal{L}_2^0(Y,X)$  of
all $Q$-Hilbert-Schmidt operators $\psi:Y\rightarrow X$. We recall
that $\psi \in \mathcal{L}(Y,X)$ is called a $Q$-Hilbert-Schmidt
operator, if
$$  \|\psi\|_{\mathcal{L}_2^0}^2:=\sum_{n=1}^{\infty}\|\sqrt{\lambda_n}\psi e_n\|^2 <\infty,
$$
and that the space $\mathcal{L}_2^0$ equipped with the inner
product
$\langle \varphi,\psi \rangle_{\mathcal{L}_2^0}=\sum_{n=1}^{\infty}\langle
\varphi e_n,\psi e_n\rangle$ is a separable Hilbert space.\\

Now, let $\phi(s);\,s\in [0,T]$ be a function with values in
$\mathcal{L}_2^0(Y,X)$, such that $\sum_{n=1}^{\infty}\|K^*\phi
Q^{\frac{1}{2}}e_n\|_{\mathcal{L}_2^0}^2<\infty.$ The Wiener
integral of $\phi$ with respect to $B^H$ is defined by

\begin{equation}\label{int}
\int_0^t\phi(s)dB^H(s)=\sum_{n=1}^{\infty}\int_0^t
\sqrt{\lambda_n}\phi(s)e_nd\beta^H_n(s)=\sum_{n=1}^{\infty}\int_0^t
\sqrt{\lambda_n}(K_H^*(\phi e_n)(s)d\beta_n(s)
\end{equation}
where $\beta_n$ is the standard Brownian motion used to  present $\beta_n^H$ as in $(\ref{rep})$.\\
Now, we end this subsection by stating the following result which
is fundamental to prove our result. It can be proved by  similar
arguments as those used to prove   Lemma 2 in \cite{carab}.
\begin{lemma}\label{lem2}
If $\psi:[0,T]\rightarrow \mathcal{L}_2^0(Y,X)$ satisfies
$\int_0^T \|\psi(s)\|^2_{\mathcal{L}_2^0}ds<\infty$,
 then the above sum in $(\ref{int})$ is well defined as a $X$-valued random variable and
 we have$$ \mathbb{E}\|\int_0^t\psi(s)dB^H(s)\|^2\leq 2Ht^{2H-1}\int_0^t \|\psi(s)\|_{\mathcal{L}_2^0}^2ds.
 $$
\end{lemma}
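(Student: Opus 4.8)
The plan is to reduce the $X$-valued series defining $(\ref{int})$ to a sum of \emph{independent} scalar Wiener integrals and then to transfer the scalar isometry $\|K_H^{*}g\|_{\mathbb{L}^2([0,t])}=\|g\|_{\mathcal{H}}$, together with the bound of Lemma \ref{lem1}, to each term. Write
$I_n:=\int_0^t\sqrt{\lambda_n}\,\psi(s)e_n\,d\beta_n^H(s)=\int_0^t\sqrt{\lambda_n}\,\bigl(K_H^{*}(\psi e_n)\bigr)(s)\,d\beta_n(s)$,
so that the integral is the series $\sum_{n}I_n$ of $X$-valued random variables, with the $\beta_n$ independent standard Brownian motions.

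First I would establish that the $I_n$ are mutually orthogonal in $L^2(\Omega;X)$. Each $I_n$ is a Wiener integral against the single Brownian motion $\beta_n$, hence a centered $X$-valued variable, and since the $\beta_n$ are independent the $I_n$ are independent for distinct indices. Expanding the inner product in an orthonormal basis $\{g_k\}$ of $X$, using $\mathbb{E}I_n=0$ and independence gives $\mathbb{E}\langle I_n,I_m\rangle=0$ for $n\neq m$; consequently $\mathbb{E}\bigl\|\sum_{n=1}^N I_n\bigr\|^2=\sum_{n=1}^N\mathbb{E}\|I_n\|^2$.

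Next, for fixed $n$, the It\^o isometry for the $X$-valued integral against $\beta_n$ yields $\mathbb{E}\|I_n\|^2=\lambda_n\int_0^t\|(K_H^{*}(\psi e_n))(s)\|_X^2\,ds$. To control the right-hand side I would expand $\psi(\cdot)e_n=\sum_k\phi_n^k(\cdot)g_k$ in the basis $\{g_k\}$; since $K_H^{*}$ acts coordinate-wise, $\|(K_H^{*}(\psi e_n))(s)\|_X^2=\sum_k|(K_H^{*}\phi_n^k)(s)|^2$, so that after integration and the scalar isometry (on the interval $[0,t]$) one gets $\int_0^t\|(K_H^{*}(\psi e_n))(s)\|_X^2\,ds=\sum_k\|\phi_n^k\|_{\mathcal{H}}^2$. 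Applying Lemma \ref{lem1} to each coordinate (with $T$ replaced by $t$) bounds this by $2Ht^{2H-1}\int_0^t\sum_k|\phi_n^k(s)|^2\,ds=2Ht^{2H-1}\int_0^t\|\psi(s)e_n\|_X^2\,ds$, whence $\mathbb{E}\|I_n\|^2\le 2Ht^{2H-1}\int_0^t\|\sqrt{\lambda_n}\,\psi(s)e_n\|^2\,ds$.

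Finally I would sum over $n$. Using $\|\psi(s)\|_{\mathcal{L}_2^0}^2=\sum_n\|\sqrt{\lambda_n}\,\psi(s)e_n\|^2$ and monotone convergence to interchange the sum with the integral, the orthogonality from the first step gives $\sum_n\mathbb{E}\|I_n\|^2\le 2Ht^{2H-1}\int_0^t\|\psi(s)\|_{\mathcal{L}_2^0}^2\,ds$, which is finite by hypothesis. Finiteness of $\sum_n\mathbb{E}\|I_n\|^2$ together with orthogonality shows the partial sums are Cauchy in the complete space $L^2(\Omega;X)$, so the series converges and the integral is a well-defined $X$-valued random variable; letting $N\to\infty$ in $\mathbb{E}\bigl\|\sum_{n=1}^N I_n\bigr\|^2=\sum_{n=1}^N\mathbb{E}\|I_n\|^2$ then delivers the stated estimate. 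I expect the only delicate point to be the clean transfer of the scalar $K_H^{*}$ isometry and of Lemma \ref{lem1} to the vector-valued coordinates, i.e. justifying the coordinate-wise interchanges of the sums with the integral and with $K_H^{*}$; the probabilistic decoupling and the final summation are routine.
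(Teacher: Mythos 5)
Your argument is correct and complete. Note, though, that the paper does not actually write out a proof of this lemma: it only remarks that it ``can be proved by similar arguments as those used to prove Lemma 2 in Caraballo et al.\ (2011)'', and the computation it has in mind is the one it does carry out later for Lemma \ref{lem3}: expand over the basis $\{e_n\}$ of $Y$, apply the fractional It\^o isometry in its double-integral form $\mathbb{E}\|\int_0^t\varphi_n\,d\beta_n^H\|^2=H(2H-1)\int_0^t\int_0^t\langle\varphi_n(s),\varphi_n(r)\rangle\,|s-r|^{2H-2}\,ds\,dr$, bound the inner product by Cauchy--Schwarz so as to recognize the $|\mathcal{H}|$-norm of $s\mapsto\|\varphi_n(s)\|_X$, and finish with Lemma \ref{lem1}. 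You reach the same per-component quantity by a different route: you transfer each term to an ordinary Wiener integral via $K_H^{*}$, use the classical It\^o isometry for deterministic $X$-valued integrands, and then expand in a basis of $X$ to invoke the scalar isometry $\|K_H^{*}g\|_{L^2([0,t])}=\|g\|_{\mathcal{H}}$ coordinate-wise; indeed $\sum_k\|\phi_n^k\|^2_{\mathcal{H}}$ equals exactly the double integral above, so the two computations are equivalent. Yours is slightly longer but has the merit of making explicit two points the paper leaves implicit: the orthogonality of the summands $I_n$ in $L^2(\Omega;X)$, which is what justifies $\mathbb{E}\|\sum_nI_n\|^2=\sum_n\mathbb{E}\|I_n\|^2$ and hence the well-definedness claim, and the fact that one must first pass from $\|\cdot\|_{\mathcal{H}}$ to $\|\cdot\|_{|\mathcal{H}|}$ via the inclusion chain of Lemma \ref{lem1} before its quantitative bound (with $T$ replaced by $t$) can be applied.
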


\subsection{The stochastic convolution integral}
 In this subsection we
present a few properties of the stochastic convolution integral of the form
$$
Z(t)=\int_{0}^{t}U(t,s)\sigma(s)dB^H(s),\qquad t\in[0,T],
$$
where $\sigma (s)\in \mathcal{L}_2^0(Y,X)$ and $\{U(t,s),\, 0\leq s\leq t \leq T\}$ is  an
evolution system of operators. \\

The properties of the process $Z$ are crucial when regularity of
the mild solution to stochastic evolution equation is studied, see \cite{da} for asystematic account of the theory of mild solutions
to infinite-dimensional stochastic equations. Unfortunately, the
process $Z$ is not a martingale, and standard tools of the
martingale theory, yielding e.g. continuity of the trajectories or
$\mathbb{L}^2-$estimates are not available.

 The following result on the stochastic convolution integral $Z$ holds.

\begin{lemma}\label{lem3}

Suppose that $\sigma:[0,T]\rightarrow \mathcal{L}_2^0(Y,X)$ satisfies
${\sup_{t\in[0,T]} \|\sigma(t)\|^2_{\mathcal{L}_2^0}<\infty}$,and Suppose that $\{U(t,s),\, 0\leq s\leq t \leq
T\}$ is an evolution system of operators satisfying  $
\|U(t,s)\|\leq Me^{-\beta(t-s)},$  for some constants $\beta>0$
and $M\geq 1$   for  all   $t\geq s. $ Then, we have $$ \mathbb{E}\|\int_0^tU(t,s)\sigma(s)dB^H(s)\|^2\leq C  M^2t^{2H}
  (\sup_{t\in[0,T]} \|\sigma(t)\|_{\mathcal{L}_2^0})^2.$$
\end{lemma}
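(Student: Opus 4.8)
The plan is to recognize the stochastic convolution, for each fixed $t$, as an ordinary Wiener integral of the $\mathcal{L}_2^0$-valued integrand $\psi_t(s):=U(t,s)\sigma(s)$, and then to invoke Lemma \ref{lem2} directly. First I would fix $t\in[0,T]$ and observe that, since $U(t,s)$ is a bounded linear operator on $X$ and $\sigma(s)\in\mathcal{L}_2^0(Y,X)$, the composition $\psi_t(s)=U(t,s)\sigma(s)$ again lies in $\mathcal{L}_2^0(Y,X)$, with the Hilbert--Schmidt bound $\|\psi_t(s)\|_{\mathcal{L}_2^0}\leq\|U(t,s)\|\,\|\sigma(s)\|_{\mathcal{L}_2^0}$. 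This follows at once from the definition of the $Q$-Hilbert--Schmidt norm, since $\sum_n\|\sqrt{\lambda_n}\,U(t,s)\sigma(s)e_n\|^2\leq\|U(t,s)\|^2\sum_n\|\sqrt{\lambda_n}\,\sigma(s)e_n\|^2$.

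Next I would verify the integrability hypothesis of Lemma \ref{lem2}. Using the uniform bound $\|U(t,s)\|\leq Me^{-\beta(t-s)}\leq M$ together with $\sup_{t\in[0,T]}\|\sigma(t)\|_{\mathcal{L}_2^0}<\infty$, one obtains
$$\int_0^t\|\psi_t(s)\|^2_{\mathcal{L}_2^0}\,ds\leq M^2\Big(\sup_{t\in[0,T]}\|\sigma(t)\|_{\mathcal{L}_2^0}\Big)^2\int_0^t ds<\infty,$$
so that $\psi_t$ is admissible and the convolution $Z(t)=\int_0^t\psi_t(s)\,dB^H(s)$ is a well-defined $X$-valued random variable in the sense of $(\ref{int})$. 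Applying Lemma \ref{lem2} to $\psi_t$ then yields
$$\mathbb{E}\|Z(t)\|^2\leq 2Ht^{2H-1}\int_0^t\|\psi_t(s)\|^2_{\mathcal{L}_2^0}\,ds\leq 2Ht^{2H-1}\, M^2\Big(\sup_{t\in[0,T]}\|\sigma(t)\|_{\mathcal{L}_2^0}\Big)^2\, t,$$
which is precisely the claimed estimate with $C=2H$.

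The one genuinely delicate point, and the step I would treat most carefully, is the application of Lemma \ref{lem2} to an integrand that itself depends on the upper limit $t$ through the factor $U(t,s)$. For a fixed $t$ this causes no real difficulty, because $s\mapsto U(t,s)\sigma(s)$ is then a fixed deterministic $\mathcal{L}_2^0$-valued function to which Lemma \ref{lem2} applies verbatim; one must, however, check its strong measurability in $s$ on $[0,t]$, which is inherited from the strong continuity of $(t,s)\mapsto U(t,s)$ for $t>s$ and the measurability of $\sigma$, so that the series in $(\ref{int})$ converges and the Wiener integral is legitimately defined. I expect no obstacle beyond this bookkeeping. It is worth emphasizing that only the uniform bound $\|U(t,s)\|\leq M$ enters the $\mathbb{L}^2$-estimate, the exponential decay being superfluous here; this is also the essential difference from the semigroup computation of \cite{boufoussi3}, where $U(t,s)$ is replaced by $e^{A(t-s)}$, while the present argument only requires the evolution-family bound $(\ref{w1})$ with $k=0$.
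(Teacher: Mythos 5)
Your argument is correct, but it reaches the estimate by a genuinely different (and shorter) route than the paper. The paper does not invoke Lemma \ref{lem2} at all here: it expands $\mathbb{E}\|\int_0^tU(t,s)\sigma(s)dB^H(s)\|^2$ via the fractional It\^o isometry into the double integral $H(2H-1)\sum_n\int_0^t\int_0^t\langle U(t,s)\sigma(s)e_n,U(t,r)\sigma(r)e_n\rangle|s-r|^{2H-2}\,ds\,dr$, bounds the integrand using $\|U(t,s)\|\leq Me^{-\beta(t-s)}$ and the supremum of $\|\sigma\|_{\mathcal{L}_2^0}$, changes variables, discards the exponentials, and finally identifies $\alpha_H\int_0^t\int_0^t|u-v|^{2H-2}\,du\,dv=R_H(t,t)=t^{2H}$ via (\ref{r(t,t)}), arriving at the constant $C=1$. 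You instead freeze $t$, observe that $\psi_t(s)=U(t,s)\sigma(s)$ is a deterministic $\mathcal{L}_2^0$-valued integrand satisfying $\|\psi_t(s)\|_{\mathcal{L}_2^0}\leq\|U(t,s)\|\,\|\sigma(s)\|_{\mathcal{L}_2^0}$, and apply Lemma \ref{lem2} verbatim, getting $C=2H$. Both are valid; your justification of the one delicate point (the integrand depending on the upper limit $t$ is harmless once $t$ is fixed) is exactly right, and your observation that only the uniform bound $\|U(t,s)\|\leq M$, not the exponential decay, is actually used applies equally to the paper's own computation, where $e^{-\beta u},e^{-\beta v}$ are simply bounded by $1$. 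What the paper's longer computation buys is a marginally sharper constant and an argument that displays the covariance structure explicitly (which is presumably why the authors advertise it as a ``new technical lemma''); what your argument buys is economy, since it reuses the already-established Wiener-integral estimate rather than re-deriving it for the convolution.
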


\begin{proof} Let $\{e_n\}_{n\in\N}$ be the complete orthonormal
basis of $Y$ and $\{\beta_n^H\}_{n\in\N}$ is a sequence of
independent, real-valued standard fractional Brownian motion each
with the same Hurst parameter $H\in(\frac{1}{2},1)$. Thus, using
fractional It\^o isometry one can write

{\small
 \begin{eqnarray*}
   \mathbb{E}\|\int_0^tU(t,s)\sigma(s)dB^H(s)\|^2&=& \sum_{n=1}^{\infty} \mathbb{E}\|\int_0^tU(t,s)\sigma(s)e_nd\beta_n^H(s)\|^2\\
  &=&\sum_{n=1}^{\infty}\int_0^t\int_0^t<U(t,s)\sigma(s)e_n,U(t,r)\sigma(r)e_n>\\
 &\times&  H(2H-1)|s-r|^{2H-2}dsdr\\
&\leq& H(2H-1)\int_0^t\{\|U(t,s)\sigma(s)\|\\
&\times&\int_0^t\|U(t,r)\sigma(r)\||s-r|^{2H-2}dr\}ds\\
&\leq& H(2H-1)M^2\int_0^t\{e^{-\beta(t-s)}\|\sigma(s)\|_{\mathcal{L}_2^0}\\
&\times&\int_0^te^{-\beta(t-r)}|s-r|^{2H-2}\|\sigma(r)\|_{\mathcal{L}_2^0}dr\}ds.\\
 \end{eqnarray*}
 }
Since $\sigma$ is bounded, one can then conclude that {\small
 \begin{eqnarray*}
   \mathbb{E}\|\int_0^tU(t,s)\sigma(s)dB^H(s)\|^2&\leq &H(2H-1)M^2(
   \sup_{t\in[0,T]}
    \|\sigma(t)\|_{\mathcal{L}_2^0})^2\int_0^t\{e^{-\beta(t-s)} \\
   &\times&\int_0^te^{-\beta(t-r)}|s-r|^{2H-2}dr\}ds.
 \end{eqnarray*}
 }
 Make the following change of variables, $v=t-s$ for the first integral and
 $u=t-r$  for the second. One can write
{\small
 \begin{eqnarray*}
   \mathbb{E}\|\int_0^tU(t,s)\sigma(s)dB^H(s)\|^2&\leq &H(2H-1)M^2(
   \sup_{t\in[0,T]}
    \|\sigma(t)\|_{\mathcal{L}_2^0})^2\int_0^t\{e^{-\beta v} \\
   &\times&\int_0^te^{-\beta u}|u-v|^{2H-2}du\}dv\\
   &\leq&H(2H-1)M^2 (
   \sup_{t\in[0,T]}    \|\sigma(t)\|_{\mathcal{L}_2^0})^2\int_0^t\int_0^t|u-v|^{2H-2}dudv.\\
 \end{eqnarray*}
 }
By using (\ref{r(t,t)}), we get that
$$ \mathbb{E}\|\int_0^tU(t,s)\sigma(s)dB^H(s)\|^2\leq C  M^2t^{2H}
  (\sup_{t\in[0,T]} \|\sigma(t)\|_{\mathcal{L}_2^0})^2.
 $$
\end{proof}

\begin{remark}
Thanks to Lemma \ref{lem3}, the stochastic integral $Z(t)$ is
well-defined.
\end{remark}
\section{Existence and Uniqueness of Mild Solutions}

In this section we study the existence and uniqueness of mild
 solutions of equation (\ref{eq1}).  Henceforth we will assume that the family $\{A(t),\,t\in[0,T]\}$
 of linear operators generates
an evolution system of operators $\{U(t,s),\, 0\leq s\leq t \leq
T\}$. \\
Before stating and proving the main result, we give the definition
of mild solutions for equation (\ref{eq1}).

\begin{definition}
A $X$-valued  process $\{x(t),\;t\in[-r,T]\}$, is called  a mild
solution of equation (\ref{eq1}) if
\begin{itemize}
\item[$i)$] $x(.)\in \mathcal{C}([-r,T],\mathbb{L}^2(\Omega,X))$,
\item[$ii)$] $x(t)=\varphi(t), \, -r \leq t \leq 0$.
\item[$iii)$]For arbitrary $t \in [0,T]$, $x(t)$ satisfies the
following integral equation:
\begin{eqnarray*}
x(t)&=&U(t,0)(\varphi(0)+g(0,\varphi(-\rho(0))))-g(t,x(t-\rho(t)))\\
&-& \int_0^t U(t,s)A(s)g(s,x(s-\rho(s)))ds +\int_0^t U(t,s)f(s,x(s-\rho (s))ds\\
&+&\int_0^tU(t,s)\sigma(s)dB^H(s)\;\;\; \mathbb{P}-a.s
\end{eqnarray*}
\end{itemize}
\end{definition}
We introduce  the following  assumptions:

\begin{itemize}
\item [$(\mathcal{H}.1)$]
\begin{itemize}
  \item [$i)$]The evolution family is exponentially
stable, that is, there exist two constants $\beta>0$ and $M\geq 1$
such that
$$
\|U(t,s)\|\leq Me^{-\beta(t-s)},\qquad     for\, all \quad t\geq
s,
$$
  \item [$ii)$] There exist a constant $M_*>0$\, such that
$$
\|A^{-1}(t)\| \leq M_*\qquad for \; all \quad t\in [0,T].
$$
\end{itemize}

\item [$(\mathcal{H}.2)$] The maps   $f, g:[0,T]\times X \rightarrow X
$ are continuous  functions and there  exist two positive
constants $C_1$ and $C_2$, such that for all $t\in[0,T]$ and
$x,y\in X$:
\begin{itemize}
  \item[$i)$] $\|f(t,x)-f(t,y)\|\vee \|g(t,x)-g(t,y)\|\leq C_1\|x-y\|$.
  \item[$ii)$]  $\|f(t,x)\|^2 \vee \|A^k(t)g(t,x)\|^2\leq C_2(1+\|x\|^2),\quad k=0,1.$

\end{itemize}

\item [$(\mathcal{H}.3)$]

\begin{itemize}
  \item [$i)$] There exists a constant $0<L_*<\frac{1}{M_*}$ such that
$$
\|A(t)g(t,x)-A(t)g(t,y)\|\leq L_*\|x-y\|,
$$
for all $t\in[0,T]$ and $x,y\in X$.
  \item [$ii)$] The function $g$ is continuous in the quadratic mean
  sense: for all $x(.)\in\mathcal{C}([0,T],L^2(\Omega,X))$,
  we have
  $$
\lim_{t\longrightarrow s}\E\|g(t(x(t))-g(s,x(s))\|=0.
  $$
\end{itemize}

\item [$(\mathcal{H}.4)$]

\begin{itemize}
  \item [$i)$]The map  $\sigma:[0,T]\longrightarrow \mathcal{L}^0_2(Y,X)$
is bounded, that is :  there exists a positive
constant such that $\|\sigma (t)\|_{\mathcal{L}^0_2(Y,X)}\leq L$
uniformly in $t \in [0,T]$.\\
  \item [$ii)$]The maps  $r,\rho:[0,\infty]\rightarrow \mathbb{R}$
  are
a continuous
  functions  satisfying the condition that
$$
 -\tau\leq\rho(t),r(t)\leq t, \; \forall t\geq0.
$$
\end{itemize}
Moreover, we assume that $\varphi \in \mathcal{C}([-\tau,0],\mathbb{L}^2(\Omega,X))$.\\
\end{itemize}

The main result of this paper is given in the next theorem.
\begin{theorem}\label{jthm1}
Suppose that $(\mathcal{H}.1)$-$(\mathcal{H}.4)$  hold. Then, for
all $T>0$,  the equation (\ref{eq1}) has a unique mild solution on
$[-\tau,T]$.
\end{theorem}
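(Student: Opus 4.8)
The plan is to realize the mild solution as the unique fixed point of an integral operator and invoke the Banach fixed point principle. First I would introduce the Banach space $\mathcal{S}_T$ of all processes $x\in\mathcal{C}([-\tau,T],\L^2(\Omega,X))$ that coincide with $\varphi$ on $[-\tau,0]$, equipped with $\|x\|_{\mathcal{S}_T}^2=\sup_{t\in[-\tau,T]}\E\|x(t)\|^2$, and define the operator $\Psi$ on $\mathcal{S}_T$ by $(\Psi x)(t)=\varphi(t)$ for $t\in[-\tau,0]$ and, for $t\in[0,T]$,
\begin{eqnarray*}
(\Psi x)(t)&=&U(t,0)(\varphi(0)+g(0,\varphi(-\rho(0))))-g(t,x(t-\rho(t)))\\
&-&\int_0^t U(t,s)A(s)g(s,x(s-\rho(s)))ds+\int_0^t U(t,s)f(s,x(s-\rho(s)))ds\\
&+&\int_0^tU(t,s)\sigma(s)dB^H(s).
\end{eqnarray*}
A fixed point of $\Psi$ is exactly a mild solution, so it suffices to show that $\Psi$ is a well-defined contraction on $\mathcal{S}_T$.

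Second, I would verify that $\Psi$ maps $\mathcal{S}_T$ into itself. By $(\mathcal{H}.2)(ii)$ the quantities $g(s,\cdot)$ and $A(s)g(s,\cdot)$ are controlled in $\L^2$ by $C_2(1+\|\cdot\|^2)$, which together with $\|U(t,s)\|\le M$ from $(\mathcal{H}.1)(i)$ makes the neutral and drift integrals finite in $\L^2$, while the stochastic convolution is square-integrable by Lemma \ref{lem3}. The more delicate point is quadratic-mean continuity of $t\mapsto(\Psi x)(t)$: for the deterministic integrals this follows from the strong continuity of $U(t,s)$ and dominated convergence, for the pointwise neutral term from $(\mathcal{H}.3)(ii)$, and for $Z(t)=\int_0^tU(t,s)\sigma(s)dB^H(s)$ one must separately show $\E\|Z(t+h)-Z(t)\|^2\to0$ by splitting the integral and reusing the computation behind Lemma \ref{lem3}.

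Third, for the contraction estimate I take $x,y\in\mathcal{S}_T$ and observe that the initial-data term and the stochastic convolution do not depend on the process, hence cancel in $(\Psi x)(t)-(\Psi y)(t)$, leaving only the three $f$- and $g$-terms. The crucial manipulation is to write the pointwise neutral difference as $g(t,x)-g(t,y)=A^{-1}(t)\big[A(t)g(t,x)-A(t)g(t,y)\big]$ and combine $(\mathcal{H}.1)(ii)$ with $(\mathcal{H}.3)(i)$ to bound it by $M_*L_*\|x-y\|$; the two integral terms are bounded, via $(\mathcal{H}.2)(i)$, $(\mathcal{H}.3)(i)$, Cauchy--Schwarz and $\|U(t,s)\|\le M$, by a constant times $T^2\sup\E\|x-y\|^2$. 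Collecting these with a Young-type inequality $\|a+b+c\|^2\le(1+\epsilon)\|a\|^2+C_\epsilon\|b+c\|^2$ yields
$$\|\Psi x-\Psi y\|_{\mathcal{S}_T}^2\le\big[(1+\epsilon)M_*^2L_*^2+C_\epsilon\,T^2(M^2L_*^2+M^2C_1^2)\big]\|x-y\|_{\mathcal{S}_T}^2.$$

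Fourth, the main obstacle is exactly this contraction constant. Because the neutral term $g(t,x(t-\rho(t)))$ sits outside any time integral, its contribution $M_*^2L_*^2$ cannot be shrunk by taking $T$ small; this is where the hypothesis $L_*<1/M_*$ in $(\mathcal{H}.3)(i)$ is indispensable, since it gives $M_*^2L_*^2<1$, so that for $\epsilon$ small enough $(1+\epsilon)M_*^2L_*^2<1$. Note that the $\epsilon$-splitting is essential precisely to avoid the crude factor in $(a+b+c)^2\le3(a^2+b^2+c^2)$, for $3M_*^2L_*^2$ need not be below $1$ under the stated assumption. With that leading factor fixed below $1$, I would choose $T$ small enough that the remaining $T^2$-terms keep the whole bracket under $1$, making $\Psi$ a contraction and producing a unique mild solution on a short interval; the solution is then extended to any horizon by repeating the argument on successive subintervals $[T,2T],[2T,3T],\dots$, or, equivalently, by running the whole estimate on $[0,T]$ at once under a Bielecki-type weighted norm $\sup_t e^{-\lambda t}\E\|x(t)\|^2$, which leaves the neutral factor $M_*^2L_*^2$ unchanged while driving the integral contributions to zero as $\lambda\to\infty$.
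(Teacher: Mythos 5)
Your proposal is correct and follows essentially the same route as the paper: the same fixed-point operator on $\mathcal{C}([-\tau,T],\mathbb{L}^2(\Omega,X))$, the same decomposition into the pointwise neutral term plus the two integral terms, the same use of $\|A^{-1}(t)\|\le M_*$ together with $L_*<1/M_*$ to keep the non-shrinkable neutral contribution below $1$, and the same small-interval contraction followed by stepwise extension. The only cosmetic difference is your $(1+\epsilon)$/$C_\epsilon$ splitting (and the optional Bielecki norm) versus the paper's weighted inequality $(a+b+c)^2\le\frac{1}{\nu}a^2+\frac{2}{1-\nu}(b^2+c^2)$ with $\nu=L_*M_*$, which achieves exactly the same effect.
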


\begin{proof}

Fix $T > 0$ and let $B_T :=
\mathcal{C}([-\tau,T],\mathbb{L}^2(\Omega, X))$ be the Banach
space of all continuous functions from $[-\tau, T]$ into
$\mathbb{L}^2(\Omega, X))$, equipped with the supremum norm
$$\|x\|_{B_T}^2=\sup_{-\tau\leq t \leq T}\mathbb{E}\|x(t,\omega)\|^2.$$
 Let us consider the set
 $$S_T(\varphi)=\{x\in B_T : x(s)=\varphi(s),\; \mbox {for} \;\;s \in [-\tau,0] \}.$$
 $S_T(\varphi)$ is a closed subset of $B_T$ provided with the norm  $\|.\|_{B_T}.$\\
We transform     (\ref{eq1}) into a fixed-point problem.  Consider
the operator
 $\psi$ on $S_T(\varphi)$ defined  by
$\psi(x)(t)=\varphi(t)$ for $t\in [-\tau,0]$
 and for $t\in [0,T]$
 {\small
 \begin{eqnarray*}
   \psi(x) (t)&=&U(t,0)(\varphi(0)+g(0,\varphi(r(0))))-g(t,x(t-r(t) ))\\
  &-&\int_0^t  U(t,s)A(s)g(s,x(s-r(s)))ds
 + \int_0^t U(t,s)f(s,x(s-\rho (s))ds\\
 &+& \int_0^tU(t,s)\sigma(s)dB^H(s)\;\\
&=&  \sum_{i=1}^5I_i(t).
 \end{eqnarray*}
 }
Clearly, the fixed points of the operator $\psi$ are mild
solutions of (\ref{eq1}). The fact that $\psi$ has a fixed point will
be proved in several steps. We will first prove that the function $\psi$ is well defined. \\
 {\bf Step 1:} $\psi$ is well defined.  Let $x \in S_T(\varphi)$ and $t\in [0,T]$, we are going to show that each function $t\rightarrow I_i(t)$
 is continuous  on $[0,T]$ in the $\mathbb{L}^2(\Omega,X)$-sense.\\

From Definition \ref{d1}, we obtain
$$
\lim_{h\longrightarrow0}(U(t+h,0)-U(t,0))(\varphi(0)+g(0,\varphi(r(0))))=0.
$$
From $(\mathcal{H}.1)$, we have
$$
\|(U(t+h,0)-U(t,0))(\varphi(0)+g(0,\varphi(r(0))))\|\leq
Me^{-\beta t}(e^{-\beta h}+1)\|\varphi(0)+g(0,\varphi(r(0)))\|\in
L^2(\Omega).
$$
Then we conclude by the Lebesgue dominated theorem that
$$
\lim_{h\longrightarrow0}\E\|I_1(t+h)-I_1(t)\|^2=0.
$$

 Moreover, assumption $(\mathcal{H}.2)$ ensures that
$$
\lim_{h\longrightarrow0}\E\|I_2(t+h)-I_2(t)\|^2=0.
$$

To show that the third term $I_3(h)$ is continuous, we suppose
$h>0$ (similar calculus for $h<0$). We have

\begin{eqnarray*}
|I_3(t+h)-I_3(t)|&\leq & \left|\int_0^t ( U(t+h,s)-U(t,s))A(s) g(s,x(s-r(s)))ds\right|\\
&& +\left|\int_t^{t+h} (U(t,s)g(s,x(s-r(s)))ds\right|\\
&\leq & I_{31}(h)+I_{32}(h).
\end{eqnarray*}

By H\"older's inequality, we have

$$
\E\|I_{41}(h)\|\leq t\E\int_0^t \|
U(t+h,s)-U(t+h,s))A(s)g(s,x(s-r(s))\|^2ds.
$$
By Definition \ref{d1}, we obtain
$$
\lim_{h\longrightarrow0}( U(t+h,s)-U(t,s))A(s)g(s,x(s-r(s)))=0.
$$
From $(\mathcal{H}.1)$ and $(\mathcal{H}.2)$, we have
$$
\|U(t+h,s)-U(t,s))A(s)g(s,x(s-r(s))\|\leq
C_2Me^{-\beta(t-s)}(e^{-\beta h }+1) \|A(s)g(s,x(s-r(s))\|\in
L^2(\Omega).
$$
 Then we conclude by the Lebesgue dominated theorem that
$$
\lim_{h\longrightarrow0}\E\|I_{31}(h)\|^2=0.
$$
So, estimating as before. By  using $(\mathcal{H}.1)$ and
$(\mathcal{H}.2)$, we get
$$
\E\|I_{32}(h)\|^2\leq \frac{M^2C_2(1-e^{-2\beta
h})}{2\beta}\int_t^{t+h}(1+\E\|x(s-r(s))\|^2ds.
$$
Thus,
$$
\lim_{h\longrightarrow0}\E\|I_{32}(h)\|^2=0.
$$

 For the fourth term $I_4(h)$, we suppose $h>0$
(similar calculus for $h<0$). We have
\begin{eqnarray*}
|I_4(t+h)-I_4(t)|&\leq & \left|\int_0^t ( U(t+h,s)-U(t,s))f(s,x(s-\rho(s)))ds\right|\\
&& +\left|\int_t^{t+h} (U(t,s)f(s,x(s-\rho(s)))ds\right|\\
&\leq & I_{41}(h)+I_{42}(h).
\end{eqnarray*}

By H\"older's inequality, we have
$$
\E\|I_{41}(h)\|\leq t\E\int_0^t \|
U(t+h,s)-U(t+h,s))f(s,x(s-\rho(s))\|^2ds.
$$

Again exploiting properties of Definition \ref{d1}, we obtain
$$
\lim_{h\longrightarrow0}( U(t+h,s)-U(t,s))f(s,x(s-\rho(s)))=0,
$$
and
$$
\|U(t+h,s)-U(t,s))f(s,x(s-\rho(s))\|\leq
Me^{-\beta(t-s)}(e^{-\beta h }+1)\|f(s,x(s-\rho(s))\|\in
L^2(\Omega).
$$
 Then we conclude by the Lebesgue dominated theorem that
$$
\lim_{h\longrightarrow0}\E\|I_{41}(h)\|^2=0.
$$

 On the  other hand, by $(\mathcal{H}.1)$ ,  $(\mathcal{H}.2)$, and the H\"older's inequality, we have
\begin{eqnarray*}
\E\|I_{42}(h)\|\leq\frac{M^2C_2(1-e^{-2\beta
h})}{2\beta}\int_t^{t+h} (1+\E\|x(s-\rho(s)\|^2)ds.
\end{eqnarray*}
Thus
 $$
 \lim_{h\rightarrow 0}I_{42}(h)=0.$$

Now, for the term $I_5(h)$, we have
\begin{eqnarray*}
I_5(h)&\leq & \|\int_0^t (U(t+h,s)-U(t,s)\sigma(s)dB^H(s)\|\\
&+&\|\int_t^{t+h} U(t+h,s)\sigma(s)dB^H(s)\|\\
&\leq & I_{51}(h)+I_{52}(h).
\end{eqnarray*}

By   Lemma \ref{lem2}, we get that
\begin{eqnarray*}
E|I_{51}(h)|^2&\leq &2Ht^{2H-1}\int_0^t \|[U(t+h,s)-U(t,s)]\sigma(s)\|_{\mathcal{L}_2^0}^2ds.\\
\end{eqnarray*}
 Since
 $$
 \displaystyle\lim_{h\rightarrow 0} \|[U(t+h,s)-U(t,s)] \sigma(s)\|_{\mathcal{L}_2^0}^2=0
 $$
  and
 $$\|(U(t+h,s)-U(t,s) \sigma(s)\|_{\mathcal{L}_2^0}\leq (MLe^{-\beta (t-s)}(e^{-\beta h+1})\in \mathbb{L}^1([0,T],\, ds),$$
 we conclude, by the dominated convergence theorem that,
 $$ \lim_{h\rightarrow 0}\mathbb{E}|I_{51}(h)|^2=0.  $$
 Again by Lemma \ref{lem2}, we get that

$$
\mathbb{E}|I_{52}(h)|^2\leq  \frac{2Ht^{2H-1}LM^2(1-e^{-2\beta h
})}{2\beta}.
$$
Thus,
 $$ \lim_{h\rightarrow 0}\mathbb{E}|I_{52}(h)|^2=0.  $$
 The above arguments show that $\displaystyle\lim_{h\rightarrow
0}\mathbb{E}\|\psi(x)(t+h)-\psi(x)(t)\|^2=0$.  Hence, we conclude
that  the function  $t \rightarrow \psi(x)(t)$ is continuous on
$[0,T]$ in the $\mathbb{L}^2$-sense.

{\bf Step 2:}    Now, we are going to show that $\psi$ is a
contraction mapping in $S_{T_1}(\varphi)$ with some $T_1\leq T$ to
be specified later.
 Let $x,y\in S_T(\varphi)$,  by using the inequality $(a+b+c)^2\leq \frac{1}{\nu}a^2+\frac{2}{1-\nu}b^2+\frac{2}{1-\nu}c^2,$
  where $\nu:=L_*M_*<1,$ we obtain for any fixed  $t\in [0,T]$
\begin{eqnarray*}
\|\psi(x)(t)&-&\psi(y)(t)\|^2\\
&\leq&\frac{1}{\nu}\|g(t,x(t-r(t)))-g(t,y(t-r(t)))\|^2\\
&&+\frac{2}{1-\nu}\|\int_0^t U(t,s)A(s)(g(s,x(s-r(s)))-g(s,y(s-r(s)))ds\|^2\\
&&+\frac{2}{1-\nu}\|\int_0^tU(t,s)(f(s,x(s-\rho(s)))-f(s,y(s-\rho(s)))ds\|^2\\
&=& \sum_{k=1}^3J_k(t).
\end{eqnarray*}

By using the fact that the operator $\|(A^{-1}(t))\|$ is bounded,
combined with the condition $(\mathcal{H}.3)$,  we obtain that

\begin{eqnarray*}
\E\|J_1(t)\|&\leq&\|A^{-1}(t)\|^2\E|A(t)g(t,x(t-r(t)))-A(t)g(t,y(t-r(t)))\|\\
&\leq& \frac{L_*^2M_*^2}{\nu}\mathbb{E}\|x(t-r(t))-y(t-r(t))\|^2\\
&\leq & \nu
\sup_{s\in[-\tau,t]} \mathbb{E}\|x(s)-y(s)\|^2.\\
\end{eqnarray*}

  By hypothesis $(\mathcal{H}.3)$  combined with  H\"older's inequality, we get that
   \begin{eqnarray*}
\E\|J_2(t)\|&\leq&
\E\|\int_0^t U(t,s)\left[A(t)g(t,x(t-r(t)))-A(t)g(t,y(t-r(t)))\right]ds\|\\
&\leq& \frac{2}{1-\nu}\int_0^t M^2e^{-2\beta(t-s)}ds
\int_0^t\mathbb{E}\|x(s-r)-y(s-r)\|^2ds\\
&\leq & \frac{2M^2L_*^2}{1-\nu}\frac{1-e^{-2\beta t}}{2\beta}t
\sup_{s\in[-\tau,t]} \mathbb{E}\|x(s)-y(s)\|^2.\\
\end{eqnarray*}

Moreover, by hypothesis$(\mathcal{H}.2)$  combined with H\"older's
inequality, we can conclude that
 \begin{eqnarray*}
E\|J_3(t)\|&\leq&
E\|\int_0^t U(t,s)\left[f(s,x(s-\rho(s)))-f(s,y(s-\rho(s)))\right]ds\|\\
&\leq& \frac{2C_1^2}{1-\nu}\int_0^t M^2e^{-2\beta(t-s)}ds
\int_0^t\mathbb{E}\|x(s-r)-y(s-r)\|^2ds\\
&\leq & \frac{2M^2C_1^2}{1-\nu}\frac{1-e^{-2\beta t}}{2\beta}t
\sup_{s\in[-\tau,t]} \mathbb{E}\|x(s)-y(s)\|^2.\\
\end{eqnarray*}

Hence
$$\sup_{s\in[-\tau,t]}\mathbb{E}\|\psi(x)(s)-\psi(y)(s)\|^2\leq
\gamma(t) \sup_{s\in[-\tau,t]} \mathbb{E}\|x(s)-y(s)\|^2,
$$
where

$$
\gamma(t)=\nu+[L_*^2+C_1^2]\frac{2M^2}{1-\nu}\frac{1-e^{-2\beta
t}}{2\beta}t
$$
 By condition    $(\mathcal{H}.3)$, we have
$\gamma(0)=\nu= L_*M_* <1$. Then there exists $0<T_1\leq T $ such
that $0<\gamma(T_1)<1$ and $\psi$ is a contraction mapping on
$S_{T_1}(\varphi)$ and therefore has a unique fixed point, which
is a mild solution of equation (\ref{eq1}) on $[-\tau,T_1]$. This
procedure can be repeated in order to extend the solution to the
entire interval $[-\tau,T]$ in finitely  many steps. This
completes the proof.
\end{proof}

\section{An Example}
Let us consider the following stochastic partial neutral
functional differential equation with finite variable delays
driven by a cylindrical fractional Brownian motion:
\begin{equation}\label{pe}
\left\{\begin{array}{lll}
   d\left[u(t,\zeta)+G(t,u(t-r(t),\zeta))\right]&= &
    \left[\frac{\partial^2}{\partial^2\zeta}u(t,\zeta)+b(t,\zeta)u(t,\zeta)+F(t, u(t-\rho(t),\zeta))\right]dt\\
  &+& \sigma(t) dB^H(t)\\
 u(t,0)+G(t,u(t-r(t),0)) = 0,&& \hspace{-.5cm} t\geq0 \\
 u(t,\pi)+G(t,u(t-r(t),\pi)) = 0,&&\hspace{-.5cm}t\geq0 \\
   u(t,\zeta)= \varphi(t,\zeta),&&\hspace{-2.5cm} t\in[-\tau,0]\; 0\leq \zeta
   \leq\pi,
  \end{array}\right.
\end{equation} where $B^H$ is a fractional Brownian motion,
$b(t,\zeta)$ is a continuous function and is uniformly H\"older
continuous in $t$,  $F$, $G:\R^+\times\R\longrightarrow\R$ are
continuous
functions.\\
To study this system, we consider the space $X=\mathbb{L}^2([0,\pi],\R)$
and   the operator $A:D(A)\subset X\longrightarrow X$ given  by
$Ay=y{''}$ with
$$
D(A)=\{y\in X: y''\in X,\quad y(0))=y(\pi)=0\}.
$$
 It is well known that $A$ is the infinitesimal generator of an
analytic semigroup \{$T(t)\}_{t\geq 0}$ on $X$. Furthermore,
$A$ has discrete spectrum with eigenvalues $-n^2,\, n\in \N$
 and the corresponding normalized eigenfunctions given by
 $$
e_n:=\sqrt{\frac{2}{\pi}}\sin nx,\; n=1,2,....
$$
In addition  $(e_n)_{n\in\N}$ is  a complete orthonormal basis in
$X$ and $$ T(t)x=\sum_{n=1}^{\infty}e^{-n^2t}<x,e_n>e_n
$$
for $x\in X$ and $t\geq0$. It follows from this representation
that $T(t)$ is compact for every $t>0$ and that $\|T(t)\|\leq
e^{-t}$ for every $t\geq0$.\\
  \\
On the domain $D(A)$,   we define the  operators $A(t):D(A)\subset
X\longrightarrow X$ by
$$
A(t)x(\zeta)=Ax(\zeta)+b(t,\zeta)x(\zeta).
$$
By assuming that $b(.,.)$ is continuous and that
$b(t,\zeta)\leq-\gamma$ $(\gamma>0)$ for every $t\in \R$,
$\zeta\in[0,\pi]$,  it follows that the system
$$
  \left\{
\begin{array}{ll}
u'(t)&=  A(t)u(t), \quad t\geq s,   \\
u(s) &=  x\in X , \\
\end{array}
\right.
$$
has an associated evolution family given by
$$
U(t,s)x(\zeta)=\left[T(t-s)\exp(\int_s^t
b(\tau,\zeta))d\tau)x\right](\zeta).
$$
From this expression, it follows that $U(t,s)$ is a compact linear
operator and that for every $s,t\in[0,T]$ with $t>s$
$$
\|U(t,s)\|\leq e^{-(\gamma+1)(t-s)}
$$
In addition, $A(t)$ satisfies the assumption $\mathcal{H}_1- ii)$
(see \cite{baghli}.\\
In order to define the operator $Q:
Y:=L^2([0,\pi],\R)\longrightarrow Y$, we choose a sequence
$\{\lambda_n\}_{n\in\N}\subset \R^+$, set $Qe_n=\lambda_ne_n$, and
assume that
$$
tr(Q)=\sum_{n=1}^\infty \sqrt{\lambda_n}<\infty.
$$
 Define the fractional Brownian motion in $Y$ by
$$
B^H(t)=\sum_{n=1}^\infty \sqrt{\lambda_n}\beta^H(t)e_n,
$$
where $H\in(\frac{1}{2},1)$ and $\{\beta^H_n\}_{n\in\N}$ is a
sequence of one-dimensional fractional Brownian motions mutually
independent.\\
To write the initial-boundary value problem (\ref{pe}) in the
abstract form we assume the following: Letting $u(t)(.)=u(t,.)$.
For $t\in[0,T]$, $u\in X$ and $\zeta\in[0,\pi]$

\begin{itemize}
  \item [$i)$] The substitution operator $f:[0,T]\times X\longrightarrow
  X$ defined by $f(t,u)(.)=F(t,u(.))$ is continuous and we impose
  suitable conditions on F to verify assumption  $\mathcal{H}_2$.
  \item [$ii)$] The substitution operator $g:[0,T]\times X\longrightarrow
  X$ defined by $g(t,u)(.)=G(t,u(.))$ is continuous and we impose
  suitable conditions on G to verify assumptions  $\mathcal{H}_2$ and $\mathcal{H}_3$.
  \item [$iii)$] The function $\sigma:[0,T]\longrightarrow \mathcal{L}^0_2(L^2([0,\pi],\R),L^2([0,\pi],\R))$
   is bounded, that is,  there exists a positive
constant $L$ such that $\|\sigma (t)\|_{\mathcal{L}^0_2}\leq L$
uniformly in $t \in[0,T]$.\\
\end{itemize}

Thus the problem (\ref{pe}) can be written in the abstract form
{\small \begin{eqnarray*}
 \left\{\begin{array}{lll}
d[x(t)+g(t,x(t-r(t)))]=[A(t)x(t)+f(t,x(t-\rho(t))]dt+\sigma
(t)dB^H(t)   ,\;0\leq t \leq T,\nonumber\\
x(t)=\varphi(t) ,\;-\tau \leq t \leq 0.
\end{array}\right.
\end{eqnarray*}
}

Furthermore, if we impose suitable condition on the  delay
functions $r(.)$,  $\rho(.)$ and on the initial value $\varphi$ to
verify assumptions on theorem \ref{jthm1}, we can conclude that
the system \ref{pe} has a unique mild solution on $[-\tau,T]$.


\section*{References}

\end{document}